\numberwithin{equation}{section}
\newtheorem{thm}{Theorem}[section]
\newtheorem{prop}[thm]{Proposition}
\newtheorem{lem}[thm]{Lemma}
\theoremstyle{remark}
\newtheorem{rem}{Remark}[section]
\newtheorem{defn}{Definition}
\newcommand{\BBB}{\mathbb}
\newcommand{\R}{{\BBB R}}
\newcommand{\Z}{{\BBB Z}}
\newcommand{\T}{{\BBB T}}
\newcommand{\N}{{\BBB N}}
\newcommand{\C}{{\BBB C}}
\newcommand{\LR}[1]{{\langle {#1} \rangle }}
\newcommand{\al}{\alpha}
\newcommand{\be}{\beta}
\newcommand{\ga}{\gamma}
\newcommand{\vp}{\varphi}
\newcommand{\eps}{\varepsilon}
\newcommand{\e}{\varepsilon}
\newcommand{\ta}{\tau}
\newcommand{\p}{\partial}
\newcommand{\la}{\lambda}
\newcommand{\de}{\delta}
\newcommand{\Om}{\Omega}
\newcommand{\supp}{\operatorname{supp}}
\newcommand{\I}{\infty}
\newcommand{\EQS}[1]{\begin{align} #1 \end{align}}
\newcommand{\EQQS}[1]{\begin{align*} #1 \end{align*}}
\newcommand{\1}{{\mathbf 1}}
  \title[LWP for modulated NLS]
  {Remark on the local well-posedness for NLS with the modulated dispersion}
  \author[T. Tanaka]
  {Tomoyuki Tanaka}
  \address{
  Tomoyuki Tanaka\\
  Faculty of Science and Engineering\\
  Doshisha University\\
  Kyotanabe\\
  Kyoto\\
  610-0394 Japan
  }
  \email{tomtanak@mail.doshisha.ac.jp}
  \keywords{nonlinear Schr\"odinger equation; modulated dispersion; Young integral; local well-posedness}
\begin{document}
\setcounter{page}{001}

\begin{abstract}
  We consider the Cauchy problem of the nonlinear Schr\"odinger equation with the modulated dispersion and power type nonlinearities in any spatial dimensions.
  We adapt the Young integral theory developed by Chouk-Gubinelli \cite{CG1} and multilinear estimates which are based on divisor counting, and show the local well-posedness.
  Our result generalizes the result by Chouk-Gubinelli \cite{CG1}.
\end{abstract}

\maketitle

\section{Introduction}\label{sec_intro}

We study the nonlinear Schr\"odinger equation with the modulated dispersion:

\EQS{\label{eq1}
  \begin{cases}
    i \p_t u = \Delta u \displaystyle{\frac{d w}{dt}} +|u|^{2k}u,\quad (t,x)\in\R\times\T^d,\\
    		u(0,x)=u_0(x),
  \end{cases}
}
where $k,d\in\N$, $w:\R\to \R$ is a real valued function
and $u:\R\times \T^d\to \C$ is the unknown function.
The goal of this note is to show the local well-posedness for the Cauchy problem of \eqref{eq1}.
Equations of this type arise in the study of an optical fiber with dispersion management \cite{A}.
One of our motivations comes from the mathematical study of the nonlinear Schr\"odinger equation with the modulated dispersion:
\EQS{\label{eq_BM}
  i du + \p_x^2 u \circ d\be +|u|^{2} u\, dt=0,\quad x\in \R,\ t>0,
}
where $\be$ is a standard real valued Brownian motion and the product is a Stratonovich product \cite{dBD1}.
The equation \eqref{eq_BM} is obtained as a limit (as $\e\to0$) of the following equation
\EQQS{
  i\p_t u+\frac{1}{\e}m\bigg(\frac{t}{\e^2}\bigg)\p_x^2 u  + |u|^2 u=0,
  \quad x\in \R,\quad t>0,
}
where $m$ satisfies certain assumptions \cite{M06}.
When $w(t)=\be(t)$ is a Brownian motion, the derivative of $w$ is not well-defined.
So, the equation of \eqref{eq1} needs to be interpreted in a proper sense as in \eqref{eq_BM}.
The mathematical analysis of dispersion managed models has attracted much attention in recent years.
See \cite{dBD1,CG21,CG2,DT1,HL06,Ste23,ZGTJH}.
In \cite{CG1}, Chouk-Gubinelli proposed a new approach to study nonlinear dispersive equations by establishing the theory of Young integral.
In particular, they showed the local well-posedness for \eqref{eq1} with $(d,k)=(1,1)$ in $L^2(\T)$ when $w$ is $\rho$--irregular for $\rho>\frac{1}{2}$ (see Definition \ref{def_irre}).
The goal of this article is to generalize their result concerning the dimension and the order of the nonlinearity.

When $w(t)=t$, the equation \eqref{eq1} becomes the nonlinear Schr\"odinger equation
\EQQS{
  i \p_t u = \Delta u  +|u|^{2k}u,
}
and the Cauchy problem
has been estensively studied since the pioneering work by Bourgain \cite{B93}.
The local well-posedness in $H^s(\T^d)$ is known to holds for any $d,k\in\N$ and any subcritical/ctirical regularities $s\ge s_c=\frac{d}{2}-\frac{1}{k}$ with the exception of the case $(d,k)=(1,1)$.
In this case, the Cauhcy problem is globally well-posed in $L^2(\T)$ by \cite{B93}.
See also \cite{Tao}.
There is also a large body of literature dealing with random data nonlinear dispersive equations \cite{BT08,BT08-2,DNY3}.

In this note, we do not restrict ourselves to the case where $w$ is a Brownian motion.
Following the same spirit as \cite{CG1}, we consider a function $w$ belonging to some class which is introduced in Definition \ref{def_irre}.
Remark that our approach is purely deterministic.

First we translate the equation of \eqref{eq1} in the Duhamel form.
We denote the unitary group associated to the linear part of $\Delta u\frac{dw}{dt}$ by $U^w(t)$, i.e.,
\EQQS{
  U^w(t) \vp
  = \sum_{n\in\Z^d} e^{in\cdot x-i|n|^2 w(t)}\hat{\vp}(n),
}
where $\hat{\vp}$ is the Fourier transform of $\vp$.
Notice that $w(-t)=-w(t)$ does not hold in general and we have $(U^w(t))^{-1}=U^{-w}(t)$ instead.
Then, the Duhamel formula corresponding to \eqref{eq1} is written as
\EQS{\label{duhamel1}
  	\vp(t) = U^w(t) \vp_0
  	 -iU^w(t) \int_0^t U^{-w}(\ta)(|\vp(\tau)|^{2k}\vp(\tau))d\tau.
}

We will investigate the effect by $w$ to the regularity of the well-posedness of \eqref{duhamel1}.
For that purpose, following \cite{CG1}, we introduce $(\rho,\ga)$--irregularity of $w$.

\begin{defn}\label{def_irre}
	Let $\rho>0$ and $0<\ga\le 1$. We say that a function $w\in C([0,T];\R)$ is $(\rho,\ga)$--irregular if:
	\EQS{
    \|\Phi^w\|_{\mathcal{W}_T^{\rho,\ga}}
		:=\sup_{a\in\R}\sup_{0\le s<t\le T} (1+|a|)^\rho
		\frac{|\Phi_t^w(a)-\Phi_s^w(a)|}{|t-s|^\ga}<\infty,
  }
	where $\Phi_t^w(a)=\int_0^t e^{iaw(r)}dr$.
	Moreover, we say that $w$ is $\rho$--irregular if there exists $\ga>\frac 12$ such that $w$ is $(\rho,\ga)$--irregular.
\end{defn}

\begin{rem}
	If $w$ is a continuous function on $[0,T]$, then we see from the mean value theorem that $\|\Phi^w\|_{\mathcal{W}_T^{0,\ga}}<\infty$ for $0<\ga\le 1$.
\end{rem}

\begin{rem}
  It is well-known that $\sup_{s<t}|t-s|^{-1-\e}|f(t)-f(s)|<\I$ for a continuous function $f$ implies that $f$ is constant.
  However, $\Phi_t^w(a)$ cannot be constant with respect to $t$ because of the definition of $\Phi_t^w(a)$.
  This is why we do not consider the case $\ga>1$.
\end{rem}

\begin{rem}
	The purely dispersive case corresponds to $w(t)=t$ for $t\in\R$.
	In this case, $w$ is $(\rho,\ga)$--irregular with $\rho+\ga\le 1$.
	Indeed, we have
	\begin{align*}
		|\Phi_t^w(a)-\Phi_s^w(a)|
		\le
		\begin{cases}
			|a|^{\theta-1}|t-s|^\theta, &{\rm if}\quad |a|\ge 1,\\
			|t-s|, &{\rm if}\quad |a|\le 1
		\end{cases}
	\end{align*}
	for any $\theta\in[0,1]$.
	It then follows from the definition that
	\begin{align*}
		\|\Phi^w\|_{\mathcal{W}_T^{\rho,\ga}}
		\lesssim  \sup_{|a|\le 1}\sup_{0\le s<t\le T}|t-s|^{1-\ga}
		 +\sup_{|a|> 1}\sup_{0\le s<t\le T}|a|^{\rho+\theta-1}
		  |t-s|^{\theta-\ga}<\infty,
	\end{align*}
	provided that $\rho+\ga\le 1$.
\end{rem}

It is worth recalling that a fractional Brownian motion is $(\rho,\ga)$--irregular for some $\rho,\ga$.
The following theorem is obtained in \cite{CateGubi}.

\begin{thm}
	Let $(W_t)_{t\ge 0}$ be a fractional Brownian motion of Hurst index $H\in(0,1)$ then for any $\rho<\frac{1}{2H}$ there exist $\ga>\frac 12$ so that with probability one the sample paths of $W$ are $(\rho,\ga)$--irregular.
\end{thm}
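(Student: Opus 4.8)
\medskip
\noindent\textbf{Proof proposal.}
The plan is to reduce the almost-sure assertion to a moment bound on the increments of $\Phi_t^W(a)=\int_0^t e^{iaW(r)}\,dr$ in the variable $t$, with the right decay in $a$, and then to run a Kolmogorov-type continuity argument carried out dyadically in the frequency $a$. Concretely, the estimate to aim for is: for every $m\in\N$ and every $\la\in[0,1]$,
\EQQS{
  \mathbb{E}\Big[\big|\Phi_t^W(a)-\Phi_s^W(a)\big|^{2m}\Big]
  \lesssim_{m,\la}(1+|a|)^{-m\la/H}\,|t-s|^{m(2-\la)},\qquad a\in\R,\ 0\le s<t\le T.
}
Granting this, I would fix $\rho<\tfrac1{2H}$, choose $\la\in(2H\rho,1)$, take $m$ so large that $\tfrac{m(2-\la)-1}{2m}>\tfrac12$, and fix $\ga\in\big(\tfrac12,\tfrac{m(2-\la)-1}{2m}\big)$. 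For each fixed $a$ the quantitative Kolmogorov (Garsia--Rodemich--Rumsey) estimate in the one-dimensional variable $t$ gives
\EQQS{
  \mathbb{E}\Big[\Big(\sup_{0\le s<t\le T}\frac{|\Phi_t^W(a)-\Phi_s^W(a)|}{|t-s|^{\ga}}\Big)^{2m}\Big]
  \lesssim_{m,\ga,T}(1+|a|)^{-m\la/H},
}
with implicit constant independent of $a$. To make this uniform in $a$, I would restrict to dyadic blocks $\{2^j\le|a|\le 2^{j+1}\}$, $j\ge0$, and combine a union bound over a net of frequencies in such a block---taken polynomially fine in $2^{-j}$---with the elementary bound $|\p_a(\Phi_t^W(a)-\Phi_s^W(a))|=\big|\int_s^t iW(r)e^{iaW(r)}\,dr\big|\le|t-s|\,\|W\|_{C([0,T])}$ and the fact that $\mathbb{E}[\|W\|_{C([0,T])}^{2m}]<\I$; since the net is polynomially fine the union bound costs only a power of $2^j$, which is beaten by the gain $2^{-jm\la/H}$ once $m$ is large. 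Summing the resulting block estimates---the series converges because $\rho<\la/(2H)$---and treating $|a|\lesssim1$ by one further application of Kolmogorov yields $\mathbb{E}\big[\|\Phi^W\|_{\mathcal{W}_T^{\rho,\ga}}^{2m}\big]<\I$, so the sample paths of $W$ are almost surely $(\rho,\ga)$--irregular.

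The moment bound is obtained as follows. Expanding the $2m$-th power and taking expectations,
\EQQS{
  \mathbb{E}\Big[\big|\Phi_t^W(a)-\Phi_s^W(a)\big|^{2m}\Big]
  =\int_{[s,t]^{2m}}\mathbb{E}\Big[\exp\Big(ia\sum_{j=1}^{2m}\eps_jW(r_j)\Big)\Big]\,dr,
}
with $\eps_j=1$ for $j\le m$ and $\eps_j=-1$ for $j>m$; since $W$ is centred Gaussian the inner expectation equals $\exp\big(-\tfrac12\mathrm{Var}(a\sum_j\eps_jW(r_j))\big)$. I would split $[s,t]^{2m}$ into the $(2m)!$ regions fixing the order of $r_1,\dots,r_{2m}$: on a region with order statistics $s\le\si_1\le\dots\le\si_{2m}\le t$ and induced signs $\de_k$ (so $\sum_k\de_k=0$), Abel summation gives $\sum_j\eps_jW(r_j)=\sum_{k=2}^{2m}c_k\big(W(\si_k)-W(\si_{k-1})\big)$ with $c_k:=\sum_{l=k}^{2m}\de_l$. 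The (strong) local nondeterminism of fractional Brownian motion supplies a constant $c_H>0$ with
\EQQS{
  \mathrm{Var}\Big(\sum_{k=2}^{2m}c_k\big(W(\si_k)-W(\si_{k-1})\big)\Big)\ge c_H\sum_{k=2}^{2m}c_k^2\,(\si_k-\si_{k-1})^{2H},
}
and, since $c_k$ has the parity of $k-1$, every $c_k$ with $k$ even is a nonzero integer, so at least $m$ of the indices $k\in\{2,\dots,2m\}$ satisfy $c_k^2\ge1$. Integrating over the simplex---using $\int_0^{|t-s|}e^{-c_Ha^2u^{2H}/2}\,du\lesssim\min(|t-s|,|a|^{-1/H})$ for each of those $\ge m$ ``active'' gaps and the trivial bound $|t-s|$ for $\si_1$ and the remaining gaps---one obtains
\EQQS{
  \mathbb{E}\Big[\big|\Phi_t^W(a)-\Phi_s^W(a)\big|^{2m}\Big]
  \lesssim_m|t-s|^m\big(\min(|t-s|,|a|^{-1/H})\big)^m\le|t-s|^{m(2-\la)}|a|^{-m\la/H}\quad(\la\in[0,1]),
}
by the interpolation $\min(|t-s|,|a|^{-1/H})\le|t-s|^{1-\la}|a|^{-\la/H}$; combining with the trivial bound $|t-s|^{2m}$ to absorb $|a|\lesssim1$ gives the claimed estimate.

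The main obstacle is precisely this moment bound, and within it the two features that make a general Hurst index harder than $H=\tfrac12$: because the increments of $W$ are correlated, $\mathrm{Var}(a\sum_j\eps_jW(r_j))$ is not explicit and one must invoke local nondeterminism to get a usable lower bound (the Abel summation being arranged precisely so that local nondeterminism applies to consecutive disjoint increments), and the bookkeeping of order statistics, sign patterns $(c_k)$ and the count of nonzero $c_k$ has to be organised so as to extract exactly the power $|a|^{-m/H}$, which corresponds to the borderline scaling $\big|\int_s^t e^{iaW(r)}\,dr\big|\approx|a|^{-1/(2H)}|t-s|^{1/2}$, valid when $|t-s|\,|a|^{1/H}\gtrsim1$. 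That same scaling explains why only the existence of some $\ga>\tfrac12$ is asserted: as $\rho\uparrow\tfrac1{2H}$ one is forced to let $\la\uparrow1$, and the Hölder exponent $\ga<1-\tfrac\la2$ that one can afford then degrades to $\tfrac12$. The details are carried out in \cite{CateGubi}; once the moment bound is in hand, the Kolmogorov and dyadic-summation steps are routine.
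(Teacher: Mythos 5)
The paper does not actually prove this theorem; it is imported verbatim from Catellier--Gubinelli \cite{CateGubi}, and your sketch is an accurate outline of the argument given there: the $2m$-th moment bound obtained by Abel summation and the (strong) local nondeterminism of fractional Brownian motion, the parity count showing that at least $m$ of the coefficients $c_k$ are nonzero integers, the interpolation $\min(|t-s|,|a|^{-1/H})\le|t-s|^{1-\la}|a|^{-\la/H}$, and finally a Kolmogorov/Garsia--Rodemich--Rumsey argument in $t$ combined with a dyadic decomposition and a density/net argument in the frequency $a$ are all exactly the mechanisms of the cited proof. The only caveat is that your write-up is a roadmap that itself defers the simplex integration and the chaining details to \cite{CateGubi}, so it should be read as a correct proof strategy rather than a self-contained proof, which is consistent with how the paper treats this statement.
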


In this article, we construct solutions by using the Young integral theory developed by \cite{CG1}.
First we define
\EQQS{
  \mathcal{N}(\vp_1,\cdots, \vp_{2k+1})
  :=\bigg(\prod_{j=1}^{k+1}\vp_{2j-1}\bigg)\prod_{l=1}^{k}\bar{\vp}_{2l}.
}
For simplicity, we denote $\mathcal{N}(\vp,\cdots,\vp)$ by $\mathcal{N}(\vp)$.
We also define a map $X:[0,\infty)\times H^{s_1}(\T^d)\times\cdots\times H^{s_1}(\T^d)\to H^{s_2}(\T^d)$ by
\EQS{\label{def_X2}
    X_t(\vp_1,\cdots,\vp_{2k+1})
    =-i \int_0^t U^{-w}(\ta)\mathcal{N}(U^{w}(\tau)\vp_1,\cdots, U^{w}(\tau)\vp_{2k+1}) d\tau
}
for some $s_1,s_2\ge 0$ (for example, $s_1>d/2$ and $s_2\ge 0$).
For simplicity, we write $X_t(\vp)=X_t(\vp,\bar{\vp},\cdots,\bar{\vp},\vp)$ and we also
we write $X_{s;t}=X_t-X_s$.
Then, thanks to Proposition \ref{prop_young3},
when $w$ is $(\rho,\ga)$--irregular, it holds that $X\in C^\ga([0,T];\mathcal{L}(H^s(\T^d)))$ for $T>0$ and $s>\frac d2- \frac{\rho}{k}$.
See Subsection \ref{ssec_notation} for the definition $\mathcal{L}(V)$.
Therefore, we can define the Young integral associated to this $X$ as the limit of suitable Riemann sums (see Theorem \ref{thm_young}).
For that purpose, let $\{t_j\}_{j=0}^n\subset\R$ satisfying $0=t_0<t_1<\cdots <t_{n-1}<t_n=t$ and set $\de_t:=\max_{1\le j\le n}|t_j-t_{j-1}|$.
We define
\EQS{\label{eq_young2}
  	\int_0^t X_{d\tau} (g_1(\tau),\cdots,g_{2k+1}(\ta))
    :=\lim_{\de_t\to0}
  	 \sum_{j=0}^n X_{t_j;t_{j+1}}(g_1(t_j),\cdots,g_{2k+1}(t_j)).
}
Under these preparations, we will solve the following equation:
\begin{align}\label{duhamel2}
	\varphi(t)=\varphi_0+\int_0^t X_{d\tau}(\varphi(\ta)).
\end{align}
We can see that $\psi(t) = U^w(t) \vp(t)$ solves \eqref{duhamel1} if $\vp$ is smooth and satisfies \eqref{duhamel2}.
Moreover, thanks to Theorem \ref{thm_young}, $\psi$ satisfies \eqref{eq1} if $w\in C^1$.
Our main result is the following:


\begin{thm}\label{thm_main}
	Let $p,d\in \N$ with $(d,k)\neq(1,1)$ and let $T_0>0$.
	Let $0<\rho\le 1$.
	Let $0<\la< \ga\le 1$ and $\ga+\la>1$.
	Assume that a real-valued continuous function $w$ is $(\rho,\ga)$--irregular.
	Then for any $s> s(\rho):= \frac d2-\frac{\rho}{k}$ and any $\psi\in H^s(\T^d)$, there exists $T=T(\|\psi\|_{H^s},\|\Phi^w\|_{\mathcal{W}_{T_0}^{\rho,\ga}})\in(0,T_0)$ such that there exists a solution $u\in C^\la([0,T];H^s(\T^d))$ to \eqref{duhamel2} emanating from $\psi$.
	Moreover, $u$ is the unique solution to \eqref{duhamel2} associated with $\psi$ that belongs to $C^\la([0,T];H^s(\T^d))$.
	Finally, the solution map $\psi\to u$ is continuous from the ball of $H^s(\T^d)$ with radius $R$ centered at the origin into $C^\la([0,T];H^s(\T^d))$.
\end{thm}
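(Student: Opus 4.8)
The plan is to solve the Young integral equation \eqref{duhamel2} by a Banach fixed-point argument on a short time interval, in the space $C^\la([0,T];H^s(\T^d))$, taking as the sole nonlinear input Proposition \ref{prop_young3}: it guarantees $X\in C^\ga([0,T_0];\mathcal L(H^s))$ for every $s>s(\rho)=\frac d2-\frac\rho k$, with the norm $\|X\|_{C^\ga_{T_0}\mathcal L}:=\|X\|_{C^\ga([0,T_0];\mathcal L(H^s))}$ controlled in terms of $\|\Phi^w\|_{\mathcal W_{T_0}^{\rho,\ga}}$; here $\mathcal L(H^s)$ denotes bounded $(2k+1)$-linear maps from $H^s(\T^d)$ to itself, since $\mathcal N$ is $(2k+1)$-homogeneous. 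Setting $\Ga(g)(t):=\psi+\int_0^t X_{d\ta}(g(\ta))$ and, for brevity, $[g]_{C^\la_TH^s}:=\sup_{0\le a<b\le T}|b-a|^{-\la}\|g(b)-g(a)\|_{H^s}$, $\|g\|_{C^\la_TH^s}:=\sup_{0\le t\le T}\|g(t)\|_{H^s}+[g]_{C^\la_TH^s}$, I would run the contraction on
\EQQS{
  B_{R,T}:=\{\,g\in C^\la([0,T];H^s(\T^d))\ :\ g(0)=\psi,\ \|g\|_{C^\la_TH^s}\le R\,\},
}
with $R:=1+\|\psi\|_{H^s}$ and $T\in(0,T_0)$ to be fixed small; throughout, $\|X\|_{C^\ga_T\mathcal L}\le\|X\|_{C^\ga_{T_0}\mathcal L}$.

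\emph{Step 1 (the crux: a Young integral estimate).} For $g\in C^\la([0,T];H^s)$ and $0\le a\le b\le T$ put $\Xi_{a,b}:=X_{a;b}(g(a))$. Since $X_{a;b}=X_b-X_a$ is $(2k+1)$-linear, for $a<c<b$ one has the exact identity $\Xi_{a,b}-\Xi_{a,c}-\Xi_{c,b}=X_{c;b}(g(a))-X_{c;b}(g(c))$, and telescoping the $2k+1$ arguments writes the right-hand side as a sum of $(2k+1)$ multilinear terms, each with one slot equal to $g(a)-g(c)$ and the other $2k$ slots among $\{g(a),g(c)\}$; using $\|X_{c;b}\|_{\mathcal L}\le\|X\|_{C^\ga_T\mathcal L}|b-c|^\ga$,
\EQQS{
  \|\Xi_{a,b}-\Xi_{a,c}-\Xi_{c,b}\|_{H^s}
  &\lesssim_{k}\|X\|_{C^\ga_T\mathcal L}\,|b-c|^\ga|c-a|^\la\,\|g\|_{C^0_TH^s}^{2k}\,[g]_{C^\la_TH^s}\\
  &\le\|X\|_{C^\ga_T\mathcal L}\,|b-a|^{\ga+\la}\,\|g\|_{C^0_TH^s}^{2k}\,[g]_{C^\la_TH^s}.
}
Since $\ga+\la>1$ by hypothesis, Theorem \ref{thm_young} applies (this is exactly the sewing input it requires): the Young integral \eqref{eq_young2} of $g(\cdot)$ against $X$ exists and, for all $0\le a\le b\le T$,
\EQQS{
  \Bigl\|\int_a^b X_{d\ta}(g(\ta))\Bigr\|_{H^s}
  &\lesssim_{k}\|X\|_{C^\ga_T\mathcal L}\Bigl(|b-a|^\ga\|g\|_{C^0_TH^s}^{2k+1}\\
  &\qquad+|b-a|^{\ga+\la}\|g\|_{C^0_TH^s}^{2k}[g]_{C^\la_TH^s}\Bigr).
}
In particular $t\mapsto\int_0^t X_{d\ta}(g(\ta))$ is $\ga$-Hölder on $[0,T]$, hence---as $\la<\ga$---lies in $C^\la([0,T];H^s)$, so $\Ga$ maps $C^\la([0,T];H^s)$ into itself.

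\emph{Step 2 (self-map, contraction, uniqueness, continuity).} If $g\in B_{R,T}$ and $T\le1$, then $g(0)=\psi$ gives $\|g\|_{C^0_TH^s}\le\|\psi\|_{H^s}+T^\la R\lesssim R$, so Step 1 yields $\|\int_a^b X_{d\ta}(g(\ta))\|_{H^s}\le C_k\|X\|_{C^\ga_{T_0}\mathcal L}|b-a|^\ga R^{2k+1}$; consequently
\EQQS{
  \|\Ga(g)\|_{C^0_TH^s}&\le\|\psi\|_{H^s}+C_k\|X\|_{C^\ga_{T_0}\mathcal L}T^\ga R^{2k+1},\\
  [\Ga(g)]_{C^\la_TH^s}&\le C_k\|X\|_{C^\ga_{T_0}\mathcal L}T^{\ga-\la}R^{2k+1},
}
the gain $T^{\ga-\la}>0$ in the seminorm being exactly where the admissible loss $\la<\ga$ is used. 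Choosing $T=T(\|\psi\|_{H^s},\|\Phi^w\|_{\mathcal W_{T_0}^{\rho,\ga}})\in(0,T_0)$ small enough that $C_k\|X\|_{C^\ga_{T_0}\mathcal L}(T^\ga+T^{\ga-\la})R^{2k+1}\le1$ gives $\Ga(B_{R,T})\subset B_{R,T}$. For the contraction, multilinearity lets one write $X_{a;b}(g(a))-X_{a;b}(\ti g(a))$ as a sum of $(2k+1)$ multilinear terms, one slot carrying $g-\ti g$ and the other $2k$ carrying $g$ or $\ti g$; running Step 1 on the two-parameter object $\Xi_{a,b}-\ti\Xi_{a,b}$ then gives $\|\Ga(g)-\Ga(\ti g)\|_{C^\la_TH^s}\le C_k\|X\|_{C^\ga_{T_0}\mathcal L}(T^\ga+T^{\ga-\la})R^{2k}\|g-\ti g\|_{C^\la_TH^s}$, which is $\le\frac12\|g-\ti g\|_{C^\la_TH^s}$ after one further shrinking of $T$. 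Banach's theorem then furnishes a solution $u\in B_{R,T}$ of \eqref{duhamel2} with $u(0)=\psi$, unique in $B_{R,T}$; a standard iteration of this local uniqueness along $[0,T]$ (using additivity of the Young integral and the fact that two solutions in $C^\la([0,T];H^s)$ agree at each iteration's left endpoint, hence lie in a common ball on which the map contracts) upgrades it to uniqueness in all of $C^\la([0,T];H^s)$. Finally, for any radius $M$, the same $T=T(M,\|\Phi^w\|_{\mathcal W_{T_0}^{\rho,\ga}})$ serves all $\psi$ with $\|\psi\|_{H^s}\le M$; subtracting the fixed-point identities for $u,\ti u$ (emanating from $\psi,\ti\psi$) and invoking the multilinear difference estimate gives $\|u-\ti u\|_{C^\la_TH^s}\le\|\psi-\ti\psi\|_{H^s}+\frac12\|u-\ti u\|_{C^\la_TH^s}$, hence $\|u-\ti u\|_{C^\la_TH^s}\le2\|\psi-\ti\psi\|_{H^s}$, so the solution map is Lipschitz on that ball.

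\emph{Expected main obstacle.} Granting Proposition \ref{prop_young3}, the argument above is soft; the one delicate point inside it is the multilinear bookkeeping of Step 1---checking that freezing the integrand at the left endpoint produces a defect of order $|b-a|^{\ga+\la}$ with $\ga+\la>1$, so that Theorem \ref{thm_young} applies, and that every term of the resulting a priori bound carries a strictly positive power of $T$, which is precisely the purpose of the loss $\la<\ga$. The genuine analytic content is entirely in Proposition \ref{prop_young3}: the divisor-counting multilinear estimates that make each $X_t$ a bounded $(2k+1)$-linear operator on $H^s$ down to $s>\frac d2-\frac\rho k$. That is where the spatial dimension $d$, the nonlinearity degree $k$, the irregularity exponent $\rho$, and the exclusion $(d,k)\ne(1,1)$ actually enter, and it is the step I expect to demand the most effort.
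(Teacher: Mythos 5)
Your proposal is correct and follows essentially the same route as the paper: the paper's proof consists of invoking Proposition \ref{prop_young3} to get $X\in C^\ga([0,T_0];\mathcal{L}_{2k+1}(H^s))$ and then citing Theorem \ref{thm_young2}, whose proof is a Picard iteration built on the Young/sewing estimate of Theorem \ref{thm_young}. You merely inline that machinery as a Banach contraction on a ball (and re-derive the sewing defect bound that Theorem \ref{thm_young} already supplies), which is a cosmetic rather than substantive difference.
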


\begin{rem}
  We make some remarks with respect to the regularity.
	\begin{enumerate}
    \item Recall that $s(1)=\frac{d}{2}-\frac{1}{k}$ is the scaling ctitical index for following nonlinear Schr\"odinger equation:
  	\begin{align*}
  		i\p_t u=\Delta u+|u|^{2k}u.
  	\end{align*}
  	So, when $\rho=1$, we can construct the solution at the subcritical range, i.e., $s> s(1)$ with $(d,k)\neq(1,1)$.
    \item When $\rho>1$, we can still prove the local well-posedness in $H^s(\T^d)$ for $s>s(1)$ with $(p,d)\neq (1,1)$ by the same proof as that of Theorem \ref{thm_main}.

  	\item Although our theorem does not contain the case $(d,k)=(1,1)$, we can prove the local well-posedness in this case in $H^s(\T)$ for $s>0$ and $\rho>\frac 12$.
  	Recall that Chouk-Gubinelli's result \cite{CG1} is $H^s(\T)$ for $s\ge 0$.

  	\item Notice that $w(t)=1$ satisfies $\|\Phi^w\|_{\mathcal{W}_T^{0,\ga}}<\infty$ for $0<\ga\le 1$.
  	The corresponding equations is $i\p_t u=|u|^{2k}u$.
  	By a classical argument, particularly using the Sobolev inequality $L^\I(\T^d)\hookrightarrow H^{\frac{d}{2}+}(\T^d)$, we can show the well-posedness for this equation in $H^s(\T^d)$ for $s>\frac d2$.
  	This observation links to the case where $\rho>0$ is sufficiently small in Theorem \ref{thm_main}.
  \end{enumerate}
\end{rem}

\begin{rem}
  In Theorem \ref{thm_main}, we first fix $T_0>0$ for the sake of completeness.
  However, we choose $T\in(0,T_0)$ sufficiently small in the proof, so we can assume $T_0=1$ without loss of generality.
\end{rem}

The proof of Theorem \ref{thm_main} is based on the Young integral theory developed by \cite{CateGubi,CG1}.
In \cite{CG1}, Chouk-Gubinelli studied the Cauchy problem of \eqref{eq1} with $(d,k)=(1,1)$, i.e., 1d cubic case, and showed the local well-posedness in $L^2(\T)$ when $w$ is $(\rho,\ga)$--irregular with $\rho>\frac{1}{2}$ and $\ga>\frac{1}{2}$.
They made a use of the irregularity of $w$, and extracted a smoothing property in order to close estimates at low regularities.
This phenomenon is called a regularization by noise, and this was achieved by (divisor) counting estimates for a cubic nonlinearity.
In the present work, we treat a nonlinearity of order $2k+1$.
So, we borrow the multilinear estimate (Lemma \ref{lem_rho2}) proved by \cite{K21} in the study of the unconditional uniqueness of \eqref{eq1} with $w(t)=t$.
We interpolate this estimate with a trivial estimate (that is, a dispersion-less estimate) and then we apply Theorem \ref{thm_young2}.
For more connections with stochastic analysis, see Section 2 of \cite{Rob23}.

This paper is organized as follows.
In Section 2, we recall some results of Young integrals.
In Section 3, we show the multilinear estimate which is the main estimate of this article.
In Section 4, we give a proof of Theorem \ref{thm_main}.

\subsection{Notation}\label{ssec_notation}
In this subsection, we fix the notation.
We write $\LR{\cdot}=(1+|\cdot|^2)^{\frac{1}{2}}$.
We denote weighted sequential $\ell^p$-norms by $\|\psi\|_{\ell_s^p}:=\|\LR{\cdot}^s \psi(\cdot)\|_{\ell^p(\Z^d)}$ for $p\in[1,\infty]$ and $s\in\R$.
In what follows, we only consider Hilbert spaces $V$ whose norm satisfies $\|\bar{v}\|_{V}=\|v\|_V$ for $v\in V$.
We denote the space of all $n$--linear\footnote{In this article, we consider conjugate linear operators as linear operators. Here, if $T$ is a conjugate linear operator, then $T(\al u+\be v)=\bar{\al}T(u)+\bar{\be}T(v)$ for $u,v\in D(T)$ and $\al,\be\in\C$.} bounded operators on $V\times\cdots\times V$ (i.e., $n$ variables) with values in $W$ by $\mathcal{L}_n(V,W)$.
Its norm is defined by
\EQQS{
  \|f\|_{\mathcal{L}_n(V,W)}
	=\sup_{\substack{\psi_1,\dots,\psi_n\in V\\\psi_j\neq0}}
	\frac{\|f(\psi_1,\dots,\psi_n)\|_W}{\|\psi_1\|_V \cdots \|\psi_n\|_V}
}
and set $\mathcal{L}_n(V):=\mathcal{L}_n(V,V)$.
Clearly we have $\mathcal{N}\in \mathcal{L}_{2k+1}(H^{s_1},H^{s_2})$ for some $s_1,s_2\ge 0$.
Let $T>0$ and $U$ be a Banach space.
We define $C^\ga([0,T];U)$ as the space of $\ga$--H\"older continuous functions from $[0,T]$ to $U$.
We also define the semi-norm
\EQQS{
  \|f\|_{C^\ga([0,T];U)}
	:=\sup_{0\le s< t\le T}\frac{\|f(t)-f(s)\|_U}{|t-s|^\ga}.
}
We equipe the space $C^\ga([0,T];U)$ with the following norm:
\EQQS{
  \|f\|_{C^{0,\ga}([0,T];U)}
	:=\|f\|_{C([0,T];U)}+\|f\|_{C^\ga([0,T];U)}.
}
Recall that the space $C^\ga([0,T];U)$ is a Banach space with the norm $\|\cdot\|_{C^{0,\ga}([0,T];U)}$.
We write $\|\cdot\|_{C_T^{0,\ga}U}:=\|\cdot\|_{C^{0,\ga}([0,T];U)}$ for simplicity.

\section{Young Integral}

Our local well-posedness result depends on the following result which is proved in [Theorem 2.3, \cite{CG1}].

\begin{thm}[Young integral]\label{thm_young}
	Let $T,\ga,\la>0$ and let $n\in\N$.
	Assume that $\ga+\la>1$.
	Let $f\in C^{\ga}([0,T];\mathcal{L}_n(V))$ and $\{g_j\}_{j=1}^n\subset C^{\la}([0,T];V)$ then the limit of Riemann sums
	\EQQS{
    I_t(f;g_1,\cdots,g_n)
		&=\int_0^t f_{d\ta} (g_1(\ta),\cdots,g_n(\ta))\\
		&:=\lim_{\de_t\to 0}\sum_{i}(f_{t_{i+1}}(g_1(t_i),\cdots,g_n(t_i))-f_{t_{i}}(g_1(t_i),\cdots,g_n(t_i)))
  }
	exist in $V$ as the partition $\{t_i\}_i$ of $[0,t]$ is refined,
	it is independent of the partition, and we have
	\begin{align*}
		&\|I_t(f;g_1,\cdots,g_n)-I_s(f;g_1,\cdots,g_n)-(f_t-f_s)(g_1(s),\cdots,g_n(s))\|_V\\
		&\le (1-2^{1-\ga-\la})^{-1}|t-s|^{\ga+\la}
		\|f\|_{C_T^{\ga}(\mathcal{L}_n(V))}
    \sum_{j=1}^n \|g_j\|_{C_T^\la V}
    \prod_{\substack{l=1,\\ j\neq j}}^n \|g_l\|_{C_T V}.
	\end{align*}
\end{thm}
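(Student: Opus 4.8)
The plan is to recognise this statement as an instance of the classical sewing lemma (the abstract mechanism behind the Young integral), whose only equation-specific ingredient is a coherence estimate extracted from the $n$-linearity of the increments $f_t-f_s$. Once that estimate is in hand, the existence of the limit, its independence of the partition, and the quantitative bound all follow from a dyadic-refinement construction together with a point-removal argument.

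\textbf{Step 1 (the germ and its coherence bound).} I would introduce the two-parameter quantity
\[
\Xi_{s,t} := (f_t - f_s)(g_1(s), \dots, g_n(s)),
\]
the natural first-order candidate for the increment $I_t - I_s$, and for $s<u<t$ set $\de\Xi_{s,u,t} := \Xi_{s,t} - \Xi_{s,u} - \Xi_{u,t}$. A direct computation gives $\Xi_{s,t} - \Xi_{s,u} = (f_t - f_u)(g_1(s), \dots, g_n(s))$, whence
\[
\de\Xi_{s,u,t} = (f_t - f_u)\big[(g_1(s),\dots,g_n(s)) - (g_1(u),\dots,g_n(u))\big].
\]
Expanding the difference of the two $n$-tuples by the standard telescoping identity for multilinear maps (replacing one slot at a time), then using $\|f_t - f_u\|_{\mathcal{L}_n(V)} \le \|f\|_{C_T^\ga}|t-s|^\ga$, $\|g_j(s) - g_j(u)\|_V \le \|g_j\|_{C_T^\la}|t-s|^\la$, and $\|g_l(\cdot)\|_V \le \|g_l\|_{C_T V}$ in the remaining slots, I obtain the coherence bound
\[
\|\de\Xi_{s,u,t}\|_V \le \om\, |t-s|^{\ga+\la}, \qquad \om := \|f\|_{C_T^\ga(\mathcal{L}_n(V))} \sum_{j=1}^n \|g_j\|_{C_T^\la V} \prod_{l \neq j} \|g_l\|_{C_T V}.
\]
This is the genuinely problem-specific step, and the hypothesis $\ga+\la>1$ is precisely the gain that makes the next step summable.

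\textbf{Step 2 (dyadic sewing and the constant).} For the dyadic partition $\pi_m$ of $[s,t]$ into $2^m$ equal subintervals, set $\Xi_{\pi_m} := \sum_{[u,v]\in\pi_m}\Xi_{u,v}$, so $\Xi_{\pi_0}=\Xi_{s,t}$. Refining $\pi_m$ to $\pi_{m+1}$ splits each interval at its midpoint $w$, giving $\Xi_{\pi_{m+1}} - \Xi_{\pi_m} = -\sum \de\Xi_{u,w,v}$ over the $2^m$ intervals of $\pi_m$, each of length $2^{-m}|t-s|$. Step 1 then yields
\[
\|\Xi_{\pi_{m+1}} - \Xi_{\pi_m}\|_V \le 2^m \cdot \om\,(2^{-m}|t-s|)^{\ga+\la} = \om\,|t-s|^{\ga+\la}\,2^{-m(\ga+\la-1)} .
\]
Since $\ga+\la>1$, the series $\sum_m 2^{-m(\ga+\la-1)}$ converges to $(1-2^{1-\ga-\la})^{-1}$, so $(\Xi_{\pi_m})_m$ is Cauchy in $V$; I define its limit to be $\mathcal{I}_{s,t}$. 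Summing the telescoping estimate from $m=0$ gives exactly
\[
\|\mathcal{I}_{s,t} - \Xi_{s,t}\|_V \le (1-2^{1-\ga-\la})^{-1}|t-s|^{\ga+\la}\,\om,
\]
which is the asserted inequality once $\mathcal{I}_{s,t}$ is identified with $I_t-I_s$.

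\textbf{Step 3 (identification, independence of the partition, Riemann sums).} It remains to promote $\mathcal{I}$ to a bona fide function $I_t:=\mathcal{I}_{0,t}$ and to show that \emph{every} Riemann sum converges to the same limit regardless of the partition. The tool is the point-removal argument: for an arbitrary partition with $N$ intervals, a pigeonhole estimate produces an interior point whose deletion merges two adjacent intervals into one of length at most $\tfrac{2}{N-1}|t-s|$; removing points one at a time and summing the resulting defects $\de\Xi$, each controlled by Step 1 against the convergent weight $\sum_N N^{-(\ga+\la)}$, shows $\Xi_\pi \to \mathcal{I}_{s,t}$ as the mesh tends to $0$, uniformly in the partition. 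This yields additivity, $\mathcal{I}_{s,t}=\mathcal{I}_{s,u}+\mathcal{I}_{u,t}$, hence $\mathcal{I}_{s,t}=I_t-I_s$, so the limit defining $I_t(f;g_1,\dots,g_n)$ exists in $V$ and is partition-independent. I expect the main obstacle to lie precisely here: controlling convergence over arbitrary non-dyadic partitions and verifying independence of the partition, as opposed to the conveniently telescoping dyadic sequence of Step 2 where the geometric-series bookkeeping is automatic.
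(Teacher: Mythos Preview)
The paper does not give its own proof of this theorem: it simply quotes the statement and attributes it to \cite[Theorem~2.3]{CG1}. Your argument---setting up the germ $\Xi_{s,t}=(f_t-f_s)(g_1(s),\dots,g_n(s))$, deriving the coherence bound $\|\de\Xi_{s,u,t}\|_V\le \om|t-s|^{\ga+\la}$ by multilinear telescoping, summing over dyadic refinements to produce the constant $(1-2^{1-\ga-\la})^{-1}$, and then running the point-removal argument for general partitions---is correct and is exactly the standard sewing-lemma proof that underlies the cited result. One small remark: the paper's $\mathcal{L}_n(V)$ allows some slots to be conjugate-linear, but since $\|\bar v\|_V=\|v\|_V$ is assumed throughout, your telescoping and norm bounds in Step~1 go through unchanged.
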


In what follows, we use the shorthand notation $f_t(\psi)=f_t(\psi,\cdots,\psi)$.
In this note, we will take $f=X$ and show that $X\in C^\ga([0,T];\mathcal{L}_{n}(H^s(\T^d)))$ in Proposition \ref{prop_young3}, where $X$ is defined in \eqref{def_X2}.

The following theorem is essentially proved in [Theorem 2.4, \cite{CG1}], and provides a fixed point procedure in the context of the Young integral.
For more general theory, see \cite{Gal23}.

\begin{thm}[Young solutions]\label{thm_young2}
	Let $n\in\N$ and $T_0>0$.
	Let $\ga,\la>0$ satisfy $\la< \ga\le 1$ and $\ga+\la>1$.
	Assume that $Y\in C^\ga([0,T_0];\mathcal{L}_n(V))$ and $X_0\equiv 0$.
	For any $\psi_0\in V$ there exists $T\in (0,T_0)$ depending only on $\|Y\|_{C^\ga ([0,T_0]; \mathcal{L}_n(V))}$ and $\|\psi_0\|_V$ such that the Young equation associated to the operator $Y$ given by
	\EQS{
    \label{eq_young}
		\psi(t)=\psi_0+\int_0^t Y_{d\tau }(\psi(\tau)),\quad 0\le t\le T
  }
	has a unique solution $\psi\in C^\la([0,T];V)$.
  Moreover, the solution map $\psi_0\to \psi$ is continuous from the ball of $H^s(\T^d)$ with radius $R$ centered at the origin into $C^\la([0,T];H^s(\T^d))$.
\end{thm}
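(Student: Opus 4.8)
The plan is to solve \eqref{eq_young} by a Banach fixed point argument for the solution map
\EQQS{
\Gamma(\psi)(t):=\psi_0+\int_0^t Y_{d\ta}(\psi(\ta)),\qquad 0\le t\le T,
}
which, for each $\psi\in C^\la([0,T];V)$, is well defined by Theorem \ref{thm_young} applied with $f=Y$ and $g_1=\cdots=g_n=\psi$, and satisfies $\Gamma(\psi)(0)=\psi_0$ since $I_0=0$. Every bound below is extracted from the remainder estimate of Theorem \ref{thm_young}, used together with the normalization $Y_0\equiv 0$ (the hypothesis written ``$X_0\equiv 0$'') and the multilinearity of $Y_t$. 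To obtain invariance of a ball, I write $\Gamma(\psi)(t)-\Gamma(\psi)(s)=I_t(Y;\psi)-I_s(Y;\psi)$, add and subtract $(Y_t-Y_s)(\psi(s),\dots,\psi(s))$, and combine the remainder estimate with $\|(Y_t-Y_s)(\psi(s),\dots,\psi(s))\|_V\le\|Y\|_{C_T^\ga\mathcal L_n(V)}|t-s|^\ga\|\psi\|_{C_TV}^n$ and $|t-s|\le T$; since $\la<\ga$ this yields
\EQQS{
\|\Gamma(\psi)\|_{C_T^\la V}\le C\|Y\|_{C_T^\ga\mathcal L_n(V)}\Big(T^{\ga-\la}\|\psi\|_{C_TV}^n+T^{\ga}\|\psi\|_{C_T^\la V}\|\psi\|_{C_TV}^{n-1}\Big),
}
with $C=C(\ga,\la,n)$. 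Because any candidate is pinned at $\psi_0$, one has $\|\psi\|_{C_TV}\le\|\psi_0\|_V+T^\la\|\psi\|_{C_T^\la V}$, so the $C_T^\la V$--seminorm controls the full norm for small $T$.

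I would then fix $R_0=2\|\psi_0\|_V$ and $\Lambda=2C\|Y\|_{C_T^\ga\mathcal L_n(V)}T^{\ga-\la}R_0^n$, and consider the closed set
\EQQS{
B:=\{\psi\in C^{0,\la}([0,T];V):\psi(0)=\psi_0,\ \|\psi\|_{C_T^\la V}\le\Lambda\}.
}
Choosing $T=T(\|Y\|_{C_{T_0}^\ga\mathcal L_n(V)},\|\psi_0\|_V)$ small enough that $T^\la\Lambda\le\|\psi_0\|_V$ (so $\|\psi\|_{C_TV}\le R_0$ on $B$) and $Cn\|Y\|_{C_T^\ga\mathcal L_n(V)}T^\ga R_0^{n-1}\le\tfrac12$, the displayed bound gives $\|\Gamma(\psi)\|_{C_T^\la V}\le\Lambda$, i.e. $\Gamma(B)\subset B$; all powers of $T$ are positive precisely because $\la<\ga$. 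For the contraction, given $\psi,\ti\psi\in B$ set $h=\psi-\ti\psi$; by multilinearity $Y_t(\psi,\dots,\psi)-Y_t(\ti\psi,\dots,\ti\psi)$ telescopes into $n$ terms, each multilinear in one copy of $h$ and $n-1$ copies taken from $\{\psi,\ti\psi\}\subset B$. Applying the remainder estimate to each term and using $\|h\|_{C_TV}\le T^\la\|h\|_{C_T^\la V}$ (valid since $h(0)=0$), every contribution carries a strictly positive power of $T$, so
\EQQS{
\|\Gamma(\psi)-\Gamma(\ti\psi)\|_{C_T^{0,\la}V}\le C^\ast T^{\ga-\la}\|\psi-\ti\psi\|_{C_T^{0,\la}V},
}
with $C^\ast$ depending only on $\ga,\la,n,\|Y\|_{C_{T_0}^\ga\mathcal L_n(V)}$ and $\|\psi_0\|_V$. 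Shrinking $T$ so that $C^\ast T^{\ga-\la}<1$, the Banach fixed point theorem on the complete metric space $B$ produces a unique fixed point $\psi\in B$ solving \eqref{eq_young}.

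To upgrade uniqueness to all of $C^\la([0,T];V)$, I would take two solutions $\psi,\ti\psi$ with equal initial data and run the same telescoping estimate on a short interval $[0,\tau]$, obtaining $\|\psi-\ti\psi\|_{C_\tau^{0,\la}V}\le C^\sharp\tau^{\ga-\la}\|\psi-\ti\psi\|_{C_\tau^{0,\la}V}$ with $C^\sharp$ controlled by the finite norms $\|\psi\|_{C_T^\la V},\|\ti\psi\|_{C_T^\la V}$; for $\tau$ small this forces $\psi\equiv\ti\psi$ on $[0,\tau]$, and since $\tau$ is uniform one patches finitely many subintervals to cover $[0,T]$. For the continuity of the data map, fix $R>0$ and a single $T=T(R)$ valid for all $\|\psi_0\|_V\le R$; writing $h_0=\psi_0-\ti\psi_0$ and $h=\psi-\ti\psi$ for the corresponding solutions, the fixed point identities give $h=h_0+(I_\cdot(Y;\psi)-I_\cdot(Y;\ti\psi))$, and the contraction estimate (now with $h(0)=h_0$) yields $\|h\|_{C_T^{0,\la}V}\le\|\psi_0-\ti\psi_0\|_V+\tfrac12\|h\|_{C_T^{0,\la}V}$, whence $\|h\|_{C_T^{0,\la}V}\le 2\|\psi_0-\ti\psi_0\|_V$. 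This is Lipschitz, in particular continuous, dependence on the datum.

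The main obstacle is the correct orchestration of the two norms. The remainder bound of Theorem \ref{thm_young} intrinsically mixes the $C_T^\la V$--seminorm with the $C_TV$--norm, and each gain of a positive power of $T$ — needed for invariance, for the contraction, and for the short-time uniqueness step — relies on the strict inequality $\la<\ga$ (producing the factor $T^{\ga-\la}$) together with $\ga+\la>1$, which is exactly the condition that makes the $|t-s|^{\ga+\la}$ remainder of Theorem \ref{thm_young} available. Ensuring that a single $T$ simultaneously secures ball invariance, the contraction estimate, the pinning $\psi(0)=\psi_0$, and uniformity over all data of size $\le R$ is the delicate bookkeeping at the heart of the argument.
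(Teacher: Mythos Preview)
Your proposal is correct and follows essentially the same route as the paper: both arguments feed Theorem \ref{thm_young} into a Picard scheme for the map $\psi\mapsto\psi_0+\int_0^\cdot Y_{d\tau}(\psi(\tau))$, exploit $\la<\ga$ to extract the small factor $T^{\ga-\la}$, and use $\|\psi\|_{C_TV}\le\|\psi_0\|_V+T^\la\|\psi\|_{C_T^\la V}$ to close. The only cosmetic difference is that the paper presents the iteration as an explicit sequence $\{\psi_m\}$ shown to be Cauchy in $C^{0,\la}_T V$, whereas you invoke the Banach fixed point theorem on a closed ball; your write-up is in fact more explicit on the upgrade of uniqueness to the full space and on Lipschitz dependence, points the paper only sketches.
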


\begin{proof}
	Let $T\in (0,T_0)$ and we define $\{\psi_m\}_{m\in\N_0}\subset C^{\la}([0,T];V)$ as $\psi_m(0)=\psi_0$.
  We also define
	\EQQS{
    \psi_{m+1}(t)
		=\psi_0+\int_0^t Y_{d\ta}(\psi_m(\ta)),\quad 0\le t\le T.
  }
	First we show that $\psi_m\in C^\la ([0,T];V)$ by induction.
	By definition, we have
	\EQQS{
    \psi_1(t)=\psi_0+\int_0^t Y_{d\ta}(\psi_0)
		=\psi_0+Y_t(\psi_0),
  }
	which implies that $\psi_1\in C^\la([0,T];V)$ since
	$\|\psi_1\|_{C_T^\la V}\lesssim T^{\ga-\la}\|Y\|_{C_T^\ga (\mathcal{L}_n(V))}\|\psi_0\|_{V}^n$ and $\ga\ge\la$.
	In particular, $\psi_1(t)$ is continuous in $V$.
	By the triangle inequality, we also have $\|\psi_m\|_{C_T V}\le T^{\la}\|\psi_m\|_{C_T^\la V}+\|\psi_0\|_V$.
	We see from Proposition \ref{thm_young} that
	\EQQS{
    &\bigg\|\int_s^t Y_{d\tau}(\psi_m(\tau))\bigg\|_{V}\\
		&\le \bigg\|\int_s^t Y_{d\tau}(\psi_m(\tau))-(Y_t-Y_s)(\psi_m(s))\bigg\|_{V}
		 +\|(Y_t-Y_s)(\psi_m(s))\|_V\\
		&\lesssim  |t-s|^{\la+\ga} \|Y\|_{C_T^\ga(\mathcal{L}_n(V))}\|\psi_m\|_{C_T^\la V}
		\|\psi_m\|_{C_T V}^{n-1}
		+\|Y_{s;t}\|_{\mathcal{L}_n(V)}\|\psi_m(s)\|_V^n\\
		&\lesssim  |t-s|^{\ga} \|Y\|_{C_T^\ga(\mathcal{L}_n(V))}
		(\|\psi_0\|_{V}+T^\la\|\psi_m\|_{C_T^\la V})^n,
  }
	which implies that $\psi_{m+1}\in C^\la([0,T];V)$ by the induction on $m$.
	It immediately follows that $\psi_{m+1}\in C([0,T];V)$ and
	\EQQS{
    \|\psi_{m+1}\|_{C_T^\la V}
		\lesssim  T^{\ga-\la}\|Y\|_{C_T^\ga(\mathcal{L}_n(V))}
		(\|\psi_0\|_{V}+T^\la\|\psi_m\|_{C_T^\la V})^n.
  }
	Next, we show that $\{\psi_m\}$ is Cauchy in $C^\la([0,T];V)$.
	As in the above estimate, we obtain
	\EQQS{
    &\|\psi_{m_1+1}-\psi_{m_2+1}\|_{C_T^{0,\la} V}\\
		&\le CT^{\ga-\la} \|Y\|_{C_T^\ga (\mathcal{L}_n(V))}
		(\|\psi_0\|_V+\|\psi_{m_1}\|_{C_T^\la V}
		 +\|\psi_{m_2}\|_{C_T^\la V})^{n-1}
		 \|\psi_{m_1}-\psi_{m_2}\|_{C_T^{0,\la} V}.
  }
  for $m_1,m_2\in\N$ and $0<T<1$.
  By choosing $T>0$ sufficiently small, the claim follows.
  Moreover, there exists $\psi\in C^\la([0,T];V)$ such that $\|\psi_m-\psi\|_{C_T^{0,\la} V}\to 0$ as $m\to\I$.
  Since $Y\in C^\ga([0,T];\mathcal{L}_n(V))$, we see from the above estimate that
  \EQQS{
    \bigg\|\int_0^t Y_{d\tau}(\psi_m(\tau))
    -\int_0^t Y_{d\tau}(\psi(\tau))\bigg\|_{C_T^{0,\ga}V}
    \to 0
  }
  as $m\to\I$,
  which implies that $\psi$ satisfies \eqref{eq_young} for $0\le t\le T$.
  Notice that the solution is unique in $C^\la([0,T];V)$.
  Finally, following the above argument with a slight modification, we can obtain the continuous dependence.
  This completes the proof.
\end{proof}

\begin{rem}
	In Theorem \ref{thm_young2}, notice that $\la<\ga$ and $\ga+\la>1$ imply $\ga>\frac12$.
	This corresponds to Theorem 2.4 in \cite{CG1}.
\end{rem}


\section{Multilinear estimates}

In this section, we show the multilinear estimate which assures that our map $X$ (defined in \eqref{def_X2}) satisfies the assumption of Theorem \ref{thm_young2}.
We need to have an estimate of the following type:

\begin{prop}\label{prop_multi}
	Let $d,k\in\N$ with $(d,k)\neq(1,1)$ and let $0<\rho\le 1$.
	Let $s>s(\rho)=\frac d2-\frac{\rho}{k}$ and $-s\le s'\le s$.
	Then, for any $1\le q\le 2k+1$
	\begin{align}\label{eq_2.1}
		\bigg\|\sum_{\substack{n_1,\dots,n_{2k+1}\in\Z^d\\n_0-n_1+\dots-n_{2k+1}=0}}
		\frac{ 1 }{\LR{\Om}^\rho }
		 \prod_{j=1}^{2k+1}\psi_j(n_j)\bigg\|_{\ell_{s'}^2(\Z_{n_0}^d)}
		\lesssim  \|\psi_q\|_{\ell_{s'}^2}
		\prod_{\substack{j=1\\j\neq q}}^{2k+1}\|\psi_j\|_{\ell_s^2},
	\end{align}
	where $\Om=|n_0|^2-|n_1|^2+\cdots-|n_{2k+1}|^2$.
\end{prop}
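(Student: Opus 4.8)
My plan is to deduce \eqref{eq_2.1} by complex interpolation between two endpoint estimates. The first, the \emph{dispersion-less} estimate, is obtained by replacing $\LR{\Om}^{-\rho}$ with $1$; it costs $s>\frac d2$ and amounts to the fact that $H^\sigma(\T^d)$ is a Banach algebra acting on $H^\tau(\T^d)$ whenever $|\tau|\le\sigma>\frac d2$. The second is the case $\rho=1$ of \eqref{eq_2.1}, which costs only $s>\frac d2-\frac1k$; this is precisely the multilinear estimate of \cite{K21}, recorded below as Lemma~\ref{lem_rho2}, and it is the sole ingredient that uses the hypothesis $(d,k)\neq(1,1)$. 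Since $s(\rho)=(1-\rho)\frac d2+\rho\left(\frac d2-\frac1k\right)$ is exactly the linear interpolant of the two thresholds $\frac d2$ and $\frac d2-\frac1k$, the hypothesis $s>s(\rho)$ is exactly what makes the interpolation close.

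First I would set up the interpolation. Replacing $\psi_j(n_j)$ by $|\psi_j(n_j)|$, and permuting the summation variables $n_1,\dots,n_{2k+1}$ among the even indices and, separately, among the odd indices — operations which change neither $\Om$ nor the frequency constraint — one reduces to $q\in\{1,2\}$, and it is then convenient to dualize: with $g\in\ell_{-s'}^2(\Z^d)$, it suffices to bound the $(2k+2)$--linear form
\EQQS{
  \La_\rho(\psi_1,\dots,\psi_{2k+1},g)
  &:=\sum_{\substack{n_0,\dots,n_{2k+1}\in\Z^d\\ n_0-n_1+\cdots-n_{2k+1}=0}}
  \LR{\Om}^{-\rho}\,g(n_0)\prod_{j=1}^{2k+1}\psi_j(n_j)
}
by $C\|g\|_{\ell_{-s'}^2}\|\psi_q\|_{\ell_{s'}^2}\prod_{j\neq q}\|\psi_j\|_{\ell_s^2}$. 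The two endpoints then read: for $\sigma>\frac d2-\frac1k$ and $|\tau|\le\sigma$, the form $\La_1$ obeys this bound with $(s,s')$ replaced by $(\sigma,\tau)$ (Lemma~\ref{lem_rho2}); and for $\sigma>\frac d2$ and $|\tau|\le\sigma$, the form $\La_0$ (in which $\LR{\Om}^{-1}$ is replaced by $1$) obeys it with $(\sigma,\tau)$ — the latter being equivalent, by Parseval's identity and harmless sign changes $n_j\mapsto -n_j$, to the classical product estimate $\|f_1\cdots f_{2k+1}\|_{H^\tau(\T^d)}\lec\|f_q\|_{H^\tau}\prod_{j\neq q}\|f_j\|_{H^\sigma}$, which I would record as a short lemma (group the $2k$ factors with index $\neq q$ first, then multiply by $f_q$).

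For the interpolation itself, fix $0<\rho<1$ (the case $\rho=1$ being Lemma~\ref{lem_rho2} itself), $s>s(\rho)$ and $|s'|\le s$. Because $s>(1-\rho)\frac d2+\rho\left(\frac d2-\frac1k\right)$, I can pick $\sigma_0>\frac d2$ (close enough to $\frac d2$) and $\sigma_1>\frac d2-\frac1k$ with $(1-\rho)\sigma_0+\rho\sigma_1=s$, and then set $\tau_i:=(s'/s)\sigma_i$, so that $|\tau_i|\le\sigma_i$ and $(1-\rho)\tau_0+\rho\tau_1=s'$. I would then apply the Stein interpolation theorem for analytic families of multilinear forms to the family $\{\La_z\}_{0\le\RE z\le1}$ in which $\LR{\Om}^{-\rho}$ is replaced by $\LR{\Om}^{-z}$: on $\RE z=0$ we have $|\LR{\Om}^{-z}|=1$ and the dispersion-less bound at $(\sigma_0,\tau_0)$, on $\RE z=1$ we have $|\LR{\Om}^{-z}|=\LR{\Om}^{-1}$ and the Lemma~\ref{lem_rho2} bound at $(\sigma_1,\tau_1)$, with both bounds uniform in $\IM z$. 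Since $[\ell_a^2,\ell_b^2]_\theta=\ell_{(1-\theta)a+\theta b}^2$, interpolating at $z=\rho$ yields the claimed bound for $\La_\rho$, hence \eqref{eq_2.1} after dualizing back in $g$.

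I expect the only genuinely nontrivial input to be Lemma~\ref{lem_rho2}, which rests on divisor-counting estimates and which I take as given here. In the interpolation step, the one point requiring care is that the endpoint estimates are invoked at the auxiliary exponents $\tau_0,\tau_1$ rather than at $s'$ — this is harmless precisely because $|s'|\le s$ forces $|\tau_i|=|s'|\sigma_i/s\le\sigma_i$, so that the admissibility ranges of both endpoints are respected; interpolating with $s'$ held fixed would fail in general, since when $s<\frac d2$ the narrower admissibility interval $[-\sigma_1,\sigma_1]$ attached to Lemma~\ref{lem_rho2} need not contain $s'$.
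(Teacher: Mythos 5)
Your overall strategy --- interpolating between a dispersion-less estimate (costing $s>\frac d2$) and the divisor-counting estimate of Kishimoto (costing $s>\frac d2-\frac 1k$), with the threshold $s(\rho)$ appearing as the linear interpolant of the two --- is exactly the mechanism of the paper's proof, and your handling of the weights via $\tau_i=(s'/s)\sigma_i$ is sound. The difference is one of packaging: the paper implements the interpolation ``by hand'', decomposing dyadically, applying H\"older's inequality in the resonance parameter $\mu$ with exponents $1/\theta$ and $1/(1-\theta)$, and invoking the two endpoint lemmas on each fixed-$\mu$, fixed-dyadic-block piece; you propose Stein interpolation of the global analytic family $\La_z$.

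The genuine gap is your $\RE z=1$ endpoint. You assert that the case $\rho=1$ of \eqref{eq_2.1} ``is precisely'' Lemma~\ref{lem_rho2}. It is not: Lemma~\ref{lem_rho2} is a \emph{fixed-$\mu$, fixed-dyadic-block} estimate carrying no weight $\LR{\Om}^{-1}$ and no summation over $\mu$, whereas your endpoint requires a bound on the full form $\La_1$ in which $\LR{\Om}^{-1}$ multiplies the unrestricted sum. Passing from the former to the latter is not free: for frequencies localized at $N_j\le\LR{n_j}<2N_j$ one has $|\Om|\lesssim N_{\mathrm{max}}^2$, so summing the (uniform-in-$\mu$) bound of Lemma~\ref{lem_rho2} against the weight $\LR{\mu}^{-1}$ produces a factor $\sum_{|\mu|\lesssim N_{\mathrm{max}}^{2}}\LR{\mu}^{-1}\sim\log N_{\mathrm{max}}$, a logarithmic divergence that must be absorbed using the strict inequality $\sigma_1>s(1)$, after which one must still perform the dyadic summation (Cauchy--Schwarz over the two comparable highest frequencies, geometric summation over the rest). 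In other words, establishing your $\RE z=1$ endpoint already requires essentially the entire dyadic/$\mu$-summation argument that constitutes the paper's proof --- which the paper sidesteps by choosing $\theta<\rho$ so that $\LR{\mu}^{-\rho/\theta}$ is summable, never needing the borderline weight $\LR{\mu}^{-1}$ on its own. The proposal is therefore right in spirit but, as written, assumes at its endpoint a statement that is (a reweighted form of) the proposition itself at $\rho=1$; you should either prove that endpoint by the dyadic argument, or, more economically, run the interpolation at the level of the fixed-$\mu$ dyadic pieces as the paper does.
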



The following estimate corresponds to the case $\rho=0$ in \eqref{eq_2.1}.

\begin{lem}\label{lem_rho3}
	Let $d,k\in\N$.
	Let $s\ge \frac d2$.
	Then, we have
	\begin{align}\label{eq_2.7}
		\sum_{\substack{n_0,n_1,\dots,n_{2k+1}\in\Z^d\\n_0-n_1+\cdots-n_{2k+1}=0}}
		\prod_{j=0}^{2k+1}\psi_j(n_j)
		\lesssim  N_{\mathrm{max}}^{-2s}\prod_{j=0}^{2k+1}N_j^s \|\psi_j\|_{\ell^2(\Z^d)}
	\end{align}
	for any nonnegative functions $\{\psi_j\}_{j=0}^{2k+1}\subset\ell^2(\Z^d)$ satisfying $\supp \psi_j\subset\{n\in\Z^d:N_j\le \LR{n}<2N_l\}$, where $N_{\mathrm{max}}:=\max_{0\le j\le 2k+1}N_j$.
	Here, the implicit constant is uniform in $\{N_j\}$.
\end{lem}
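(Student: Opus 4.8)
The plan is to use the frequency balance forced by the convolution constraint $n_0-n_1+\dots-n_{2k+1}=0$ together with the elementary lattice count $\#\{n\in\Z^d:\LR{n}\sim N\}\lesssim N^d$; the endpoint $s=\frac d2$ will turn out to be exactly what makes these two ingredients fit together.

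First I would discard trivialities: we may assume $N_j\ge 1$ for every $j$, since otherwise the annulus $\{N_j\le\LR{n}<2N_j\}$ is empty, $\psi_j\equiv 0$, and \eqref{eq_2.7} is vacuous. The one genuine idea is the following. Fix $j_0$ with $N_{j_0}=N_{\mathrm{max}}$. On the support of the summand the constraint gives $|n_{j_0}|\le\sum_{j\ne j_0}|n_j|<2(2k+1)\max_{j\ne j_0}N_j$, while $\LR{n_{j_0}}\ge N_{j_0}=N_{\mathrm{max}}$; hence there is an index $j_1\ne j_0$ with $N_{j_1}\gtrsim_k N_{\mathrm{max}}$. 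Therefore $N_{j_0}^sN_{j_1}^s\sim_k N_{\mathrm{max}}^{2s}$, so that
\[
N_{\mathrm{max}}^{-2s}\prod_{j=0}^{2k+1}N_j^s\ \sim_k\ \prod_{j\notin\{j_0,j_1\}}N_j^s,
\]
and it suffices to prove that the left-hand side of \eqref{eq_2.7} is $\lesssim\big(\prod_{j\notin\{j_0,j_1\}}N_j^s\big)\prod_{j=0}^{2k+1}\|\psi_j\|_{\ell^2(\Z^d)}$.

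To get this, I would freeze the variables $n_j$ for all $j\notin\{j_0,j_1\}$. The constraint then becomes $\pm n_{j_0}\pm n_{j_1}=c$ for a constant $c$ depending only on the frozen variables, so $n_{j_1}$ is determined by $n_{j_0}$, and Cauchy--Schwarz in $n_{j_0}$ yields $\sum_{n_{j_0}}\psi_{j_0}(n_{j_0})\psi_{j_1}(c\mp n_{j_0})\le\|\psi_{j_0}\|_{\ell^2}\|\psi_{j_1}\|_{\ell^2}$. Summing over the frozen variables then gives
\[
\sum_{n_0-n_1+\dots-n_{2k+1}=0}\prod_{j=0}^{2k+1}\psi_j(n_j)\le\|\psi_{j_0}\|_{\ell^2}\|\psi_{j_1}\|_{\ell^2}\prod_{j\notin\{j_0,j_1\}}\|\psi_j\|_{\ell^1(\Z^d)}.
\]
Since $\|\psi_j\|_{\ell^1}\le(\#\supp\psi_j)^{1/2}\|\psi_j\|_{\ell^2}\lesssim N_j^{d/2}\|\psi_j\|_{\ell^2}$, the annulus $\{N_j\le\LR{n}<2N_j\}$ containing $\lesssim N_j^d$ lattice points, we conclude
\[
\sum_{n_0-n_1+\dots-n_{2k+1}=0}\prod_{j=0}^{2k+1}\psi_j(n_j)\lesssim\Big(\prod_{j\notin\{j_0,j_1\}}N_j^{d/2}\Big)\prod_{j=0}^{2k+1}\|\psi_j\|_{\ell^2(\Z^d)}.
\]
Because $s\ge\frac d2$ and $N_j\ge 1$, we have $N_j^{d/2}\le N_j^s$, which upgrades this to the bound displayed at the end of the previous paragraph and hence proves \eqref{eq_2.7}; all implicit constants depend only on $d,k,s$ and are uniform in $\{N_j\}$.

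I do not expect a real obstacle in this lemma: it is Cauchy--Schwarz and lattice counting, and its whole content is the observation that the maximal dyadic scale is attained, up to a $k$-dependent constant, by at least two of the indices $n_0,\dots,n_{2k+1}$. The point I would be most careful about is downstream: when this ``$\rho=0$'' estimate is interpolated against the genuinely dispersive input (Lemma \ref{lem_rho2}) to produce Proposition \ref{prop_multi}, one has to check that the interpolation exponents line up so that the weight $\LR{\Om}^{-\rho}$ buys exactly the gain $\frac{\rho}{k}$ in regularity; that bookkeeping, not the present lemma, is where the threshold $s(\rho)=\frac d2-\frac{\rho}{k}$ is produced.
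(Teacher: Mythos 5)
Your proof is correct and is essentially the paper's argument: both identify that the convolution constraint forces a second frequency comparable to $N_{\mathrm{max}}$, apply Cauchy--Schwarz in those two variables, and use the $\ell^1$--$\ell^2$ (Bernstein-type) bound $\|\psi_j\|_{\ell^1}\lesssim N_j^{d/2}\|\psi_j\|_{\ell^2}$ on the remaining factors, closing with $s\ge d/2$. The only difference is cosmetic: the paper normalizes the indices by symmetry so that the two largest frequencies sit among $N_0,N_1,N_2$, whereas you argue directly for arbitrary $j_0,j_1$.
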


\begin{proof}
	By the symmetry, we may assume that
	\begin{align*}
		N_0\ge N_2\ge \cdots\ge N_{2k},
		\quad
		N_1\ge N_3\ge \cdots \ge N_{2k+1},
		\quad N_0\ge N_1.
	\end{align*}
	Note that $N_{\mathrm{max}}=N_0\sim \max\{N_1,N_2\}$.
	Without loss of generalirty, we may assume that $N_{\mathrm{second}}=N_1$, where $N_{\mathrm{second}}$ is the second largest among $N_0,\cdots,N_{2k+1}$.
	The case $N_{\mathrm{second}}=N_2$ follows similarly: we only have to switch roles of $\psi_1$ and $\psi_2$.
	The Cauchy-Schwarz inequality and the Bernstein inequality show that
	the left-hand side of \eqref{eq_2.7} is bounded by
	\begin{align*}
		&\sum_{n_1,\dots,n_{2k+1}\in\Z^d}\psi_{0}(n_1-n_2+\cdots+n_{2k+1})
		\prod_{j=1}^{2k+1}\psi_j(n_j)\\
		&\le \|\psi_0\|_{\ell^2}\|\psi_1\|_{\ell^2}
		\sum_{n_2,\dots,n_{2k+1}\in\Z^d}\prod_{j=2}^{2k+1}
		\psi_j (n_j)
		\lesssim  \|\psi_0\|_{\ell^2}\|\psi_1\|_{\ell^2}
		\prod_{j=2}^{2k+1} N_j^{\frac d2}\|\psi_j\|_{\ell^2},
	\end{align*}
	which shows \eqref{eq_2.7}.
\end{proof}

The following estimate is due to Kishimoto \cite{K21}, which corresponds to $\rho>1$ in \eqref{eq_2.1}.
Thanks to this strong decay of $\LR{\mu}^{-\rho}$, we can obtain a better estimate than \eqref{eq_2.7} with respect to the regularity.

\begin{lem}[Lemma 3.1, \cite{K21}]\label{lem_rho2}
	Let $d,k\in\N$ with $(d,k)\neq(1,1)$.
	Let $s>s(1)=\frac d2-\frac{1}{k}$.
	Then, we have
	\begin{align}\label{eq_2.6}
		\sum_{\substack{n_0,n_1,\dots,n_{2k+1}\in\Z^d\\n_0-n_1+\cdots-n_{2k+1}=0\\|n_0|^2-|n_1|^2+\cdots-|n_{2k+1}|^2=\mu}}
		\prod_{j=0}^{2k+1}\psi_j(n_j)
		\lesssim  N_{\mathrm{max}}^{-2s}\prod_{j=0}^{2k+1}N_j^s \|\psi_j\|_{\ell^2(\Z^d)}
	\end{align}
	for any $\mu\in\Z$, $\{N_j\}_{j=0}^{2k+1}\subset 2^{\N_0}$, and any nonnegative functions $\{\psi_j\}_{j=0}^{2k+1}\subset\ell^2(\Z^d)$ satisfying $\supp \psi_j\subset\{n\in\Z^d:N_j\le \LR{n}<2N_l\}$, where $N_{\mathrm{max}}:=\max_{0\le j\le 2k+1}N_j$.
	Here, the implicit constant is uniform in $\mu$ and $\{N_j\}$.
\end{lem}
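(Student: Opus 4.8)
The plan is to read \eqref{eq_2.6} as a weighted lattice-point counting estimate and to extract from the frozen value of $\Om$ a gain of two powers of the largest frequency over the trivial bound of Lemma \ref{lem_rho3}. As there, I would first use the symmetry to order the frequencies so that $N_0=N_{\mathrm{max}}\sim\max\{N_1,N_2\}$, with $N_0$ carried by a $+$ index and $N_1$ by a $-$ index. The algebraic mechanism is a resonance identity: under $n_0-n_1+\cdots-n_{2k+1}=0$, if one freezes all frequencies except a single conjugate pair, then $\Om=\mu$ becomes affine-linear in the free variable. Concretely, freezing $n_2,\dots,n_{2k+1}$ fixes $c:=n_0-n_1\in\Z^d$, and writing $n_0=n_1+c$ turns $\Om=2c\cdot n_1+|c|^2+(\mathrm{const})=\mu$ into the statement that $n_1$ lies on an affine hyperplane with normal $c$.

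The core is a counting bound uniform in $\mu$, which I analyze in the critical balanced regime $N_j\sim N$. For $c\neq0$ the number of admissible $n_1$ on this hyperplane is $\lesssim N^{d-1}/|c|$, so organizing the whole sum by the value of $c$ and using that the frozen data with prescribed $c$ number $\lesssim N^{(2k-1)d}$ gives the total $\lesssim N^{(2k-1)d}\,N^{d-1}\sum_{0<|c|\lesssim N}|c|^{-1}$. The decisive point is that $\sum_{0<|c|\lesssim N}|c|^{-1}\sim N^{d-1}$ when $d\ge2$, which supplies the \emph{second} saved power and yields a count $\lesssim N^{(2k+1)d-2}$, that is $N_{\mathrm{max}}^{-2}$ times the unconstrained count. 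This is exactly the saving that moves the threshold from $s\ge d/2$ to $s>d/2-\tfrac1k$, because the right-hand side of \eqref{eq_2.6} scales like $N^{2ks}$ and $2k\cdot\tfrac1k=2$. Two features force the hypothesis $(d,k)\neq(1,1)$: when $d=1$ the sum $\sum_{0<|c|\lesssim N}|c|^{-1}\sim\log N$ degenerates to a divisor-type factor $N^{\eps}$, and the diagonal resonances at $\mu=0$ (where the $+$ and $-$ frequencies coincide as multisets) contribute $\sim N^{(k+1)d}$, which is dominated by the main term $N^{(2k+1)d-2}$ precisely when $kd\ge2$.

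To pass from this count to the operator-norm bound \eqref{eq_2.6} one cannot simply Cauchy--Schwarz down to the counting function, since squaring is lossy here and reaches only $s>d/2-\tfrac1{2k}$. Instead I would realize the full gain at the $\ell^2$ level by an induction on $k$: peel off one conjugate pair using a sharp bilinear estimate that exploits the hyperplane confinement above, thereby reducing the $(2k+2)$-linear form with data $(\mu,n_0)$ to the analogous $2k$-linear form, the $c=0$ stratum feeding exactly the $(k-1)$-case. Throughout I would keep the inputs nonnegative and annular and recombine by Cauchy--Schwarz and Bernstein as in Lemma \ref{lem_rho3}, and finally absorb the $N^{\eps}$, divisor, and $\gcd(c)$-lattice corrections into the strict inequality $s>d/2-\tfrac1k$, which also makes the dyadic sum over $\{N_j\}$ geometric.

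I expect the main obstacle to be precisely this $\ell^2$ upgrade: making the counting gain \emph{sharp} at the level of norms rather than losing a square root, and controlling the small-$|c|$ and $\mu=0$ resonant strata where the hyperplane count degenerates and the naive $N^{d-1}/|c|$ must be replaced by the correct lattice covolume. Verifying that these resonant configurations stay dominated by $N^{(2k+1)d-2}$ for all $(d,k)\neq(1,1)$, and carrying the bilinear induction on $k$ and the gcd bookkeeping cleanly, is where the real work lies; this is the content of \cite[Lemma 3.1]{K21} that we invoke.
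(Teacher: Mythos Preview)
The paper does not prove this lemma: it is quoted verbatim from Kishimoto \cite[Lemma~3.1]{K21} and used as a black box. Your proposal is likewise not a self-contained proof but a heuristic outline of the mechanisms behind the cited result, and you explicitly defer to \cite{K21} in your final sentence. In that narrow sense the paper and your proposal agree: both invoke \cite{K21} rather than reprove the estimate.

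That said, several steps in your sketch are imprecise and would not go through as written. First, the hyperplane count for $2c\cdot n_1=K$ with $|n_1|\lesssim N$ is $\lesssim N^{d-1}\gcd(c)/|c|$, not $N^{d-1}/|c|$; the missing $\gcd$ factor spoils the clean evaluation $\sum_{0<|c|\lesssim N}|c|^{-1}\sim N^{d-1}$ (for $d=2$ it already produces a logarithm), so the ``second saved power'' does not fall out of the sum over $c$ in the way you describe. Second, and more seriously, your proposed passage from the raw count to the $\ell^2$ bound via ``induction on $k$, peeling off one conjugate pair with a sharp bilinear estimate'' is not how \cite{K21} proceeds and is not clearly workable: the constraint $\Om=\mu$ does not decouple into a bilinear piece plus a lower-order form, because the quadratic form couples all the variables through $n_0$. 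Kishimoto's argument stays at the level of the full $(2k{+}2)$-linear form and obtains the two powers of $N_{\max}$ by applying Cauchy--Schwarz together with a direct divisor-counting bound on the constrained level sets at the pair of indices carrying the top two frequencies, not by an inductive reduction in $k$. Since the present paper only cites the result, none of this affects the article itself, but your outline should be read as motivation for why one expects \eqref{eq_2.6}, not as an alternative route to it.
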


Now, we prove Proposition \ref{prop_multi} by interpolating \eqref{eq_2.7} and \eqref{eq_2.6}.

\begin{proof}[Proof of Proposition \ref{prop_multi}]
	We mainly follow the proof of Corollary 3.2 in \cite{K21}.
	We only consider the case $q=1$.
  Other cases follow from the same argument.
	By duality, if suffices to show
	\EQQS{
    \bigg|\sum_{\substack{n_0,n_1,\dots,n_{2k+1}\in \Z^d\\n_0-n_1+\cdots-n_{2k+1}=0}}
		\frac{1}{\LR{\Om}^\rho}\prod_{j=0}^{2k+1}\psi_j(n_j)\bigg|
		\le C\|\psi_0\|_{\ell_{-s'}^2}\|\psi_1\|_{\ell_{s'}^2}
		\prod_{j=2}^{2k+1}\|\psi_j\|_{\ell_{s}^2},
  }
	where $\Om=|n_0|^2-|n_1|^2+\cdots-|n_{2k+1}|^2$.
	We choose sufficiently small $\e>0$ so that $s=s(\rho)+\e$.
  For this $\e>0$, we also choose $\theta\in(0,1)$ so that $\rho-\theta=k\e$.
  Notice that $\rho/\theta>1$.
	We write $P_N\psi(n):=\1_{\{N\le \LR{n}<2N\}}\psi(n)$.
	By the H\"older inequality in $\mu$, the left-hand side of the above estimate is bounded by
	\begin{align*}
		&\sum_{N_0,\dots,N_{2k+1}\in 2^{\N_0}}\sum_{\mu\in\Z}
		\sum_{n_0,\dots,n_{2k+1}\in\Z^d}
		\frac{\1_{A(\mu)}}{\LR{\mu}^\rho}\prod_{j=0}^{2k+1}|P_{N_j}\psi_j(n_j)|\\
		&= \sum_{N_0,\dots,N_{2k+1}}
		\bigg(\sum_{\mu\in\Z}\frac{1}{\LR{\mu}^{\rho}}
		\bigg(\sum_{n_0,\dots,n_{2k+1}\in\Z^d}\1_{A(\mu)}\prod_{j=0}^{2k+1}|P_{N_j}\psi_j(n_j)|\bigg)^{\theta}\\
		&\quad\quad\quad\times
		\bigg(\sum_{n_0,\dots,n_{2k+1}\in\Z^d}\1_{A(\mu)}\prod_{j=0}^{2k+1}|P_{N_j}\psi_j(n_j)|\bigg)^{1-\theta}\bigg)\\
		&\le \sum_{N_0,\dots,N_{2k+1}}
		 \bigg(\sum_{\mu\in\Z}\frac{1}{\LR{\mu}^{1+}}\sum_{n_0,\dots,n_{2k+1}\in\Z^d}\1_{A(\mu)}\prod_{j=0}^{2k+1}|P_{N_j}\psi_j(n_j)|\bigg)^{\theta}\\
		&\quad\quad\quad \times
 		\bigg(\sum_{\mu\in\Z}\sum_{n_0,\dots,n_{2k+1}\in\Z^d}\1_{A(\mu)}\prod_{j=0}^{2k+1}|P_{N_j}\psi_j(n_j)|\bigg)^{1-\theta}.
	\end{align*}
	Here, $A(\mu)$ is defined by $A(\mu):=\{(n_0,n_1,\dots,n_{2k+1})\in(\Z^d)^{2k+2}:(*)_\mu\}$,
	where
	``$(*)_\mu$" denotes the condition
		\begin{align*}
			n_0-n_1+\cdots-n_{2k+1}=0,\quad
			\Om=|n_0|^2-|n_1|^2+\cdots-|n_{2k+1}|^2=\mu.
		\end{align*}
	We also used $\rho>\theta$.
	Note that $\theta s(1)+\frac{(1-\theta) d}{2}=s(\theta)= s(\rho)+\eps$.
  Notice that the value $\mu=|n_0|^2-|n_1|^2+\cdots-|n_{2k+1}|^2$ is determined once $n_0,\dots,n_{2k+1}$ are given.
	Thus, we have
	\begin{align*}
		\sup_{n_0,\dots,n_{2k+1}\in\Z^d}\sum_{\mu\in\Z}\1_{A(\mu)}=1.
	\end{align*}
	Then, \eqref{eq_2.7} with $s=\frac d2$ and \eqref{eq_2.6} with $s=s(1)$ show that
	\begin{align*}
		&\lesssim  \sum_{N_0,\dots,N_{2k+1}}
		\bigg(\frac{N_0N_1\dots N_{2k+1}}{N_{\mathrm{max}}^2}\bigg)^{s-\e}\prod_{j=0}^{2k+1}\|P_{N_0}\psi_0\|_{\ell^2}\\
		&\lesssim \sum_{N_0,\dots,N_{2k+1}} \bigg(\frac{N_0 N_1\dots N_{2k+1}}{N_{\mathrm{max}}^2}\bigg)^{-\e}
		\|P_{N_0}\psi_0\|_{\ell_{-s'}^2}
		\|P_{N_1}\psi_1\|_{\ell_{s'}^2}
		\prod_{j=2}^{2k+1}\|P_{N_j}\psi_j\|_{\ell_{s}^2}.
	\end{align*}
	At the last inequality, we used $N_0^sN_1^s/N_{\mathrm{max}}^{2s}\le N_0^{-s'}N_1^{s'}$ for any $-s\le s'\le s$.
	For $N_{\mathrm{max}}$ and $N_{\mathrm{second}}\sim N_{\mathrm{max}}$, we can use the Cauchy-Schwarz inequality by orthogonality.
  Here, $N_{\mathrm{second}}$ is the second largest among $N_0,\cdots,N_{2k+1}$.
	On the other hand, summations over other $N_j$'s can be closed thanks to a negative power of $N_j$.
	This completes the proof.
\end{proof}

%
%
%

\section{Well-posedness}

In this section, we prove our main result (Theorem \ref{thm_main}).
For that purpose, we first investigate the property of our map $X$.

\begin{prop}\label{prop_young3}
	Let $k,d\in \N$ with $(k,d)\neq (1,1)$ and $T,\rho,\ga>0$.
	Let $s>\frac d2-\frac{\rho}{k}$.
	Assume that a function $w\in C([0,T];\R)$ is $(\rho,\ga)$--irregular.
	Let $X$
	be defined in \eqref{def_X2}.
	Then, it holds that $X\in C^\ga([0,T];\mathcal{L}_{2k+1}(H^s(\T^d)))$.
\end{prop}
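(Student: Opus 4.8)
The plan is to pass to Fourier coefficients on $\T^d$ and reduce everything to the multilinear estimate of Proposition~\ref{prop_multi} combined with the $(\rho,\ga)$--irregularity of $w$. Writing $X_{s;t}=X_t-X_s$ and inserting the definitions of $U^{\pm w}$ and $\mathcal N$, I would first record that, for $n_0\in\Z^d$,
\EQQS{
  \widehat{X_{s;t}(\vp_1,\dots,\vp_{2k+1})}(n_0)
  =-i\!\!\sum_{\substack{n_1,\dots,n_{2k+1}\in\Z^d\\ n_0-n_1+\cdots-n_{2k+1}=0}}\!\!
  \big(\Phi_t^w(\Om)-\Phi_s^w(\Om)\big)
  \prod_{j=1}^{k+1}\ha{\vp}_{2j-1}(n_{2j-1})\prod_{l=1}^{k}\ov{\ha{\vp}_{2l}(n_{2l})},
}
where $\Om=|n_0|^2-|n_1|^2+\cdots-|n_{2k+1}|^2$ is exactly the resonance function in Proposition~\ref{prop_multi}; this identity is immediate for smooth $\vp_j$, the key point being that all the oscillatory phases coming from $U^{-w}(\ta)$ and $\mathcal N(U^w(\ta)\vp_1,\dots)$ combine into $e^{iw(\ta)\Om}$, so that $\int_s^t e^{iw(\ta)\Om}\,d\ta=\Phi_t^w(\Om)-\Phi_s^w(\Om)$. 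By Definition~\ref{def_irre} one then has $|\Phi_t^w(\Om)-\Phi_s^w(\Om)|\le\|\Phi^w\|_{\mathcal W_T^{\rho,\ga}}\LR{\Om}^{-\rho}|t-s|^\ga$ (using $(1+|\Om|)^{-\rho}\le\LR{\Om}^{-\rho}$), so this step is where the irregularity of $w$ supplies the decay $\LR{\Om}^{-\rho}$.

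Next I would take the $\ell_s^2(\Z^d_{n_0})$ norm of these coefficients, which is $\|X_{s;t}(\vp_1,\dots,\vp_{2k+1})\|_{H^s}$. Bounding each coefficient by its modulus and pulling out the factor $\|\Phi^w\|_{\mathcal W_T^{\rho,\ga}}|t-s|^\ga$ leaves
\EQQS{
  \Big\|\sum_{n_0-n_1+\cdots-n_{2k+1}=0}\frac{1}{\LR{\Om}^\rho}\prod_{j=1}^{2k+1}|\ha{\vp}_j(n_j)|\Big\|_{\ell_s^2(\Z^d_{n_0})},
}
which is the left-hand side of \eqref{eq_2.1} with $s'=s$ and $\psi_j=|\ha{\vp}_j|\ge 0$. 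Since $s>s(\rho)$ by hypothesis, Proposition~\ref{prop_multi} applies with $s'=s$ and (say) $q=1$, and bounds this quantity by $\prod_{j=1}^{2k+1}\||\ha{\vp}_j|\|_{\ell_s^2}=\prod_{j=1}^{2k+1}\|\vp_j\|_{H^s}$. Combining the last two steps we obtain
\EQQS{
  \|X_{s;t}\|_{\mathcal L_{2k+1}(H^s(\T^d))}\lec\|\Phi^w\|_{\mathcal W_T^{\rho,\ga}}\,|t-s|^\ga
  \qquad(0\le s<t\le T).
}

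Finally, since $X_0\equiv 0$, the inequality just proved with lower endpoint $0$ gives $\|X_t\|_{\mathcal L_{2k+1}(H^s(\T^d))}=\|X_{0;t}\|_{\mathcal L_{2k+1}(H^s(\T^d))}\lec\|\Phi^w\|_{\mathcal W_T^{\rho,\ga}}\,t^\ga<\I$, so each $X_t$ extends, by density of smooth functions and the estimate above, to a bounded $(2k+1)$--linear operator on $H^s(\T^d)$ (conjugate-linear in the even slots, as in the footnote of Subsection~\ref{ssec_notation}); the displayed $\ga$--Hölder bound is then precisely the assertion $X\in C^\ga([0,T];\mathcal L_{2k+1}(H^s(\T^d)))$. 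I do not anticipate a genuine obstacle: the entire divisor-counting content is already packaged in Proposition~\ref{prop_multi}, and what remains is the bookkeeping of the oscillatory phases and the routine density/extension argument; the only mild subtlety is establishing the Fourier identity first on smooth inputs before passing to general $H^s$ data.
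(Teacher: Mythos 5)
Your proposal is correct and follows essentially the same route as the paper: pass to Fourier coefficients so that the time integral of the combined phase $e^{\pm iw(\ta)\Om}$ becomes an increment of $\Phi^w$ at $\pm\Om$, use the $(\rho,\ga)$--irregularity to extract the factor $\|\Phi^w\|_{\mathcal W_T^{\rho,\ga}}\LR{\Om}^{-\rho}|t-s|^\ga$, and then invoke Proposition~\ref{prop_multi} with $s'=s$ to close the multilinear bound. The extra remarks on the sign of $\Om$ (immaterial, since the supremum in Definition~\ref{def_irre} is over all $a\in\R$) and on density/extension are fine but not needed beyond what the paper does.
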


\begin{proof}
	Let $\{\psi_j\}_{j=1}^{2k+1}\subset H^s(\T^d)$.
	We see from Proposition \ref{prop_multi} that
	\EQQS{
    &\|X_{t_1;t_2}(\psi_1,\cdots,\psi_{2k+1})\|_{H^s}\\
		&=\bigg\|\int_{t_1}^{t_2}\sum_{n_1,\dots,n_{2k+1}\in\Z^d}
		 \LR{n}^{s}e^{-iw(\ta)\Om}
		 \1_{\{n=\sum_{j=1}^{2k+1}\zeta_j n_j\}}
     \prod_{j=1}^{2k+1}J_j\hat{\psi}_{j}(n_{j})
		 d\ta\bigg\|_{\ell^2(\Z_n^d)}\\
		&\lesssim  \|\Phi^w\|_{\mathcal{W}_T^{\rho,\ga}}|t_2-t_1|^\ga
		 \bigg\|\sum_{n_1,\dots,n_{2k+1}\in\Z^d}
 		 \frac{\LR{n}^{s}\1_{\{n=\sum_{j=1}^{2k+1}\zeta_j n_j\}}}{\LR{\Om}^\rho}
 		 \prod_{j=1}^{2k+1}|\hat{\psi}_j(\zeta_j n_{j})|\bigg\|_{\ell^2(\Z_n^d)}\\
		&\lesssim  \|\Phi^w\|_{\mathcal{W}_T^{\rho,\ga}}|t_2-t_1|^\ga
		 \prod_{j=1}^{2k+1}\|\psi_j\|_{H^s},
  }
	where $\Om=|n|^2-\sum_{j=1}^{2k+1}\zeta_j|n_j|^2$, $\zeta_{2j-1}=+$, $\zeta_{2j}=-$, $J_{2j-1}\psi=\psi$ and $J_{2j}\psi=\bar{\psi}$ for a function $\psi$.
	Then, we have
	\begin{align*}
		\|X\|_{C^\ga ([0,T];\mathcal{L}_{2k+1}(H^s))}
		&=\sup_{0\le s<t\le T}\sup_{\psi_j\in H^s}
		 \frac{\|X_{s;t}(\psi_1,\cdots,\psi_{2k+1})\|_{H^s}}{|t-s|^\ga \prod_{j=1}^{2k+1}\|\psi_j\|_{H^s}}
		\lesssim  \|\Phi^w\|_{\mathcal{W}_T^{\rho,\ga}}<\infty
	\end{align*}
	as desired.
	This completes the proof.
\end{proof}

%


Now, we are ready to prove our main result (Theorem \ref{thm_main}).

\begin{proof}[Proof of Theorem \ref{thm_main}]
	Let $X$ be defined in \eqref{def_X2}.
	Proposition \ref{prop_young3} implies that $X\in C^\ga([0,T_0]; \mathcal{L}_{2k+1}(H^s(\T^d)))$ for any $T_0>0$.
	From the definition of $X$, it is clear that $X_0(\psi)=0$ for any $\psi\in H^s(\T^d)$.
	Then, Theorem \ref{thm_young2} shows that there exists $T\in(0,T_0)$ depending only on $\|X\|_{C^\ga([0,T_0];\mathcal{L}_{2k+1}(H^s(\T^d)))}$ and $\|\varphi_0\|_{H^s}$ such that there exists a unique solution $\varphi\in C^\la([0,T];H^s(\T^d))$
	to \eqref{duhamel2}, which completes the proof.
\end{proof}

\section*{Acknowlegdements}

The author is grateful to Professor Yuzhao Wang for sugestiong this problem and fruitiful discussion.
The author was supported by the EPSRC New Investigator Award (grant no. EP/V003178/1) and JSPS KAKENHI Grant Number 23K19019.


\begin{thebibliography}{99}
  \bibitem{A}
  G.P. Agrawal, Nonlinear Fiber Optics, 3rd ed., Academic Press, 2001.

  \bibitem{dBD1}
  A. de Bouard and A. Debussche, {\it The nonlinear Schr\"odinger equation with white noise dispersion}, J. Funct.\ Anal. {\bf 259} (2010), 1300--1321.

  \bibitem{B93}
  J. Bourgain, {\it Fourier transform restriction phenomena for certain lattice subsets and applications to nonlinear evolution equations, I, Schr\"odinger equations}, Geom.\ Funct.\ Anal. {\bf 3} (1993), 107--156.

  \bibitem{BT08}
  N. Burq and N. Tzvetkov, {\it Random data Cauchy theory for supercritical wave equations I: Local theory}, Invent.\ Math. {\bf 173} (2008), 449--475.

  \bibitem{BT08-2}
  N. Burq and N. Tzvetkov, {\it Random data Cauchy theory for supercritical wave equations II: a global existence result}, Invent.\ Math. {\bf 173} (2008), 477--496.

  \bibitem{CG21}
  I. C\^impean and A. Grecu, {\it The nonlinear Schr\"odinger equation with white noise dispersion on quantum graphs}, Commun.\ Math.\ Sci. {\bf 19} (2021), 405--435.

  \bibitem{Gal23}
  L. Galeti, {\it Nonlinear Young Differential Equations: A Review}, J. Dynam.\ Differential Equations {\bf 35} (2023), 985--1046.

  \bibitem{CateGubi}
  R. Catellier and M. Gubinelli, {\it Averaging along irregular curves and regularisation of ODEs}, Stochastic Process.\ Appl. {\bf 126} (2016), 2323--2366.

  \bibitem{CG1}
  K. Chouk and M. Gubinelli, {\it Nonlinear PDEs with modulated dispersion I: Nonlinear Schr\"odinger equations}, Comm.\ Partial Differential Equations {\bf 40} (2015), 2047--2081.

  \bibitem{CG2}
  K. Chouk and M. Gubinelli, {\it Nonlinear PDEs with modulated dispersion II: Korteweg-de Vries equation}, arXiv:1406.7675.


  \bibitem{DT1}
  A. Debussche and Y. Tsutsumi, {\it 1D quintic nonlinear Schr\"odinger equation with white noise dispersion}, J. Math.\ Pures Appl. {\bf 96} (2011), 363--376.


  \bibitem{DNY3}
  Y. Deng, A. R. Nahmod and H. Yue, {\it Random tensors, propagation of randomness, and nonlinear dispersive equations}, Invent.\ Math. {\bf 228} (2022), 539--686.

  \bibitem{HL06}
  D. Hundertmark and Y.-R. Lee, {\it Decay estimates and smoothness for solutions of the dispersion managed non-linear Schr\"odinger equation}, Comm.\ Math.\ Phys. {\bf 286} (2009), 851--873.

  \bibitem{HL12}
  D. Hundertmark and Y.-R. Lee, {\it Super-exponential decay of diffraction managed solitons}, Comm.\ Math.\ Phys. {\bf 309} (2012), 1--21.

  \bibitem{K21}
  N. Kishimoto, {\it Unconditional local well-posedness for periodic NLS}, J. Differential Equations {\bf 274} (2021), 766--787.

  \bibitem{M06}
  R. Marty, {\it On a splitting scheme for the nonlinear Schr\"odinger equation in a random medium}, Commun. Math.\ Sci. {\bf 4} (2006), 679--705.

  \bibitem{Rob23}
  T. Robert,
  {\it Regularization by noise for some nonlinear dispersive PDEs},
  J.\ \'E.\ D.\ P. (2023), 12p.

  \bibitem{Ste23}
  G. Stewart, {\it On the wellposedness for periodic nonlinear Schr\"odinger equations with white noise dispersion}, Stoch.\ PDE: Anal.\ Comp. (2023).

  \bibitem{Tao}
  T. Tao, Nonlinear Dispersive Equations: Local and Global Analysis, CBMS, 2006.

  \bibitem{ZGTJH}
  V. Zharnitsky, E. Grenier, S. K. Turitsyn, C. K. R. T. Jones and J. S. Hesthaven, {\it Ground states of dispersion-managed nonlinear Schr\"odinger equation}, Phys.\ Rev.\ E (3) {\bf 62} (2000), 7358--7364.
\end{thebibliography}
\end{document}